\theoremstyle{plain}
\newtheorem{lemma}{Lemma}
\newtheorem{thm}{Theorem}
\newtheorem{proposition}{Proposition}
\theoremstyle{definition}
\newtheorem{example}{Example}
\newtheorem{remark}{Remark}
\journal{}
\begin{document}

\begin{frontmatter}



\title{Universal Bound on the Eigenvalues of 2-Positive Trace-Preserving Maps}


\author{Frederik vom Ende${}^a$} 

\affiliation{organization={Dahlem Center for Complex Quantum
Systems, Freie Universit\"at Berlin},
            addressline={Arnimallee 14}, 
            city={Berlin},
            postcode={14195}, 
            country={Germany}}

\author{Dariusz Chru{\'s}ci{\'n}ski${}^b$} 

\affiliation{organization={Institute of Physics, Faculty of Physics, Astronomy and Informatics, and Nicolaus Copernicus University},
            addressline={Grudzi\k{a}dzka 5/7}, 
            city={Toru\'n},
            postcode={87-100}, 
            country={Poland}}

\author{Gen Kimura${}^c$} 
\affiliation{organization={College of Systems Engineering and Science, Shibaura Institute of Technology},
            city={Saitama},
            postcode={330-8570}, 
            country={Japan}}

\author{Paolo Muratore-Ginanneschi${}^d$} 

\affiliation{organization={Department of Mathematics and Statistics, University of Helsinki},
            addressline={P.O. Box 68}, 
            city={Helsinki},
            postcode={FIN-00014}, 
            country={Finland}}

\begin{abstract}
We prove an upper bound on the trace of any 2-positive, trace-preserving map in terms of its smallest eigenvalue. We show that this spectral bound is tight, and that 2-positivity is necessary for this inequality to hold in general. Moreover, we use this to infer a similar bound for generators of one-parameter semigroups of 2-positive trace-preserving maps. With this approach we generalize known results for completely positive trace-preserving dynamics while providing a significantly simpler proof that is entirely algebraic.
\end{abstract}



\begin{keyword}
2-positivity \sep Complete positivity \sep Quantum-dynamical semigroup \sep eigenvalue inequality \sep relaxation rates
%
\MSC[2020] 15A42 \sep 15B48 \sep 47D06 \sep 81P47 \sep 81Q10
\end{keyword}

\end{frontmatter}



\section{Introduction}

Complete positivity plays a central role both in the dynamics of quantum systems \cite{Kraus73,BreuPetr02,Davies76,RH12} and in quantum information theory in general \cite{NC10,Watrous18,Holevo12}.
It requires that a map sends positive semi-definite operators to positive semi-definite operators, even when acting only on a part of a composite system (in a tensor-product sense, cf.~Sec.~\ref{sec:main} below for the precise definition), thus ensuring that the map represents a valid physical process \cite{Paulsen03}.
Despite being such a central notion to quantum physics, little has long been known about the constraints this imposes on the eigenvalues of such maps---apart from some basic observations, e.g., that the eigenvalues come in complex conjugate pairs, or that the spectral radius of positive trace-preserving maps is always $1$.
In fact, the inverse eigenvalue problem for quantum channels (i.e., completely positive maps which are additionally trace-preserving) is notoriously difficult, and it is fully solved only for the 2-dimensional (``qubit'') case \cite{WPG10}.

From a physics point of view, investigating this connection is motivated by the fact that the eigenvalues of generators of quantum dynamics are in one-to-one relation to so-called \textit{relaxation rates} of the underlying physical process (see below for more details).
These relaxation rates can be measured in the laboratory, and they encode information about fundamental physical processes, such as thermalization, equilibration, decoherence, and dissipation \cite{RH12,AlickiLendi07,SZ20}.
Early bounds on these relaxation rates date to the 1970s \cite{GKS76}, and for qubit systems such a bound was established in \cite{KimG2002}. An optimal inequality for quantum systems of arbitrary dimension was only recently conjectured \cite{CKKS21,CFKO21} and subsequently proved \cite{MKC25} via Lyapunov-theoretic methods.
The precise statement from \cite{MKC25}, translated back to eigenvalues, is that any $d$-dimensional Markovian generator \mbox{$L:\mathbb C^{d\times d}\to\mathbb C^{d\times d}$}---i.e., $L$ is linear, and $e^{tL}$ is completely positive and trace-preserving (CPTP) for all $t\geq 0$---satisfies
\begin{equation}\label{eq:intro0}
    {\rm tr}(L)\leq d\min\Re(\sigma(L))\,,
\end{equation}
where $\min\Re(\sigma(L))$ is the smallest real part of any eigenvalue of $L$.
In the language of relaxation rates (i.e., $\Gamma_j:=-\Re(\lambda_j)$, where $\lambda_j$ denotes the eigenvalues of $L$) this result takes the original form \cite{MKC25}: \mbox{$\Gamma_j\leq\frac1d\sum_k\Gamma_k$}. 
Note that this inequality is tight, as it can be achieved by a generator $L$ of a Markovian completely positive and trace-preserving map \cite{CKKS21}.
We also note that recently, similar inequalities for qubit Schwarz dynamics \cite{CKM24}
have been obtained.

Building on these developments, the aim of this work is to show that complete positivity is not necessary for \eqref{eq:intro0} to hold, despite the fact that equality is achieved by a completely positive map.
In fact, in Theorem~\ref{thm_main2} we show that this bound continues to hold even when relaxing complete positivity to 2-positivity (again, see Sec.~\ref{sec:main} for precise definitions).
This is most interesting for quantum processes with time-dependent generators (i.e., \mbox{$\dot\Phi(t)=L(t)\circ\Phi(t)$} instead of $\dot\Phi(t)=L\circ\Phi(t)$) as there, the intermediate maps 
{$V(t,s)$}
which satisfy $\Phi(t)= {V(t,s)}\circ\Phi(s)$ may not be completely positive anymore, but often satisfy weaker notions of positivity (which have different physical implications) \cite{Wolf08a},
\cite[Sec.~7]{Chrus22}.

Our approach is fundamentally different from the original proof of the completely positive case \cite{MKC25}: using now purely algebraic methods, in Theorem~\ref{thm_main} we first establish 
an inequality on the trace of arbitrary 2-positive trace-preserving maps $\Phi$:
\begin{equation*}
    {\rm tr}(\Phi)\leq d\min\Re(\sigma(\Phi))+d^2-d\,,
\end{equation*}
and only in a simple second step we translate this to the generator case.
Thus, not only is our proof more basic and direct, we also find a constraint on the eigenvalues of 2-positive maps (as opposed to generators of such maps)---something which has not been considered previously.
Finally, we will discuss some possible directions and open questions in Section~\ref{sec_outlook}.
 
\section{Main Results}\label{sec:main}

Before presenting our main results, we first establish some notation.
Throughout this work, $\mathcal L(\mathbb C^{d\times d})$ stands for the 
set of all linear maps on $\mathbb C^{d\times d}$, $\sigma(\cdot)$ denotes 
the spectrum of a linear operator, and the symbol $\Re$ stands for real 
part of a complex number. A matrix $X\in\mathbb C^{d\times d}$ will be 
called \textit{positive semi-definite}---denoted by $X\geq 0$---if 
$\langle \psi,X\psi\rangle\geq 0$ for all $\psi\in\mathbb C^d$ (we shall 
use the convention that the inner product is linear in the second 
argument, as is common in mathematical physics); because we work over the 
complex numbers, $X\geq 0$ automatically implies that $X$ is 
Hermitian, i.e., $X^\dagger=X$ where $(\cdot)^\dagger$ denotes the 
adjoint. As is common in the physics literature, we shall also use the notation $\langle \psi | X | \psi \rangle$ in place of $\langle \psi, X \psi \rangle$, and $|x\rangle\langle y|$ for any $x,y\in\mathbb C^d$ will be short-hand for the linear map $z\mapsto \langle y,z\rangle x$ on $\mathbb C^d$.

Some special classes of linear maps:
$\Phi\in\mathcal L(\mathbb C^{d\times d})$ is called \textit{positive}
if $\Phi(A)\geq 0$ for all $A\in\mathbb C^{d\times d}$ positive semi-definite. 
Note that every positive map is \textit{Hermitian-preserving}; that is, if $X$ is Hermitian, then $\Phi(X)$ is Hermitian. (This can be readily seen by expressing any Hermitian matrix $X$ as $X=X_+ - X_-$, where $X_{\pm}$ are positive semi-definite.)
Next, given any $k\in\mathbb N$ one says $\Phi$ is \textit{$k$-positive} if the extended map ${\rm id}_k\otimes\Phi:\mathbb C^{k\times k}\otimes\mathbb C^{d\times d}\to\mathbb C^{k\times k}\otimes\mathbb C^{d\times d}$---uniquely defined via $({\rm id}_k\otimes\Phi)(A\otimes B)=A\otimes\Phi(B)$---is positive.
If $\Phi$ is $k$-positive for all $k\in\mathbb N$, then $\Phi$ is called \textit{completely positive}. This is equivalently characterized by the fact that the \textit{Choi matrix} of $\Phi$
$$
\mathsf C(\Phi):=\sum_{j,k=1}^d|j\rangle\langle k|\otimes\Phi(|j\rangle\langle k|)\in\mathbb C^{d\times d}\otimes\mathbb C^{d\times d}
$$
is positive semi-definite \cite{Choi75,Paulsen03}.
We say that $\Phi$ is \textit{trace preserving} if ${\rm tr}(\Phi(X))={\rm tr}(X)$ for all $X\in\mathbb C^{d\times d}$.
With these definitions, a \textit{quantum channel} is a completely positive, trace-preserving map \cite{NC10,Watrous18,Holevo12}.

The trace for $\Phi\in\mathcal L(\mathbb C^{d\times d})$ is defined in the usual way, i.e., ${\rm tr}(\Phi):=\sum_{j=1}^{d^2}\langle G_j,\Phi(G_j)\rangle_{\rm HS}$ for some orthonormal basis $\{G_j\}_{j=1}^{d^2}$ of $\mathbb C^{d\times d}$ with respect to the Hilbert-Schmidt inner product $\langle A,B\rangle_{\rm HS}:={\rm tr}(A^\dagger B)$.
In fact, starting from any orthonormal basis $\{g_j\}_{j=1}^d$ of $\mathbb C^d$, $\{|g_k\rangle\langle g_j|\}_{j,k=1}^d$ is an orthonormal basis of $(\mathbb C^{d\times d},\langle\cdot,\cdot\rangle_{\rm HS})$ so one has 
\begin{equation}\label{trRep}
{\rm tr}(\Phi)=\sum_{j,k=1}^d\langle g_k|\Phi(|g_k\rangle\langle g_j|)|g_j\rangle.
\end{equation}
Of course, the trace of $\Phi\in\mathcal L(\mathbb C^{d\times d})$ is also the sum of all eigenvalues of $\Phi$, thus affirming that our main results are really spectral constraints on certain subclasses of linear maps.
With all this in mind, let us first state our main results in a precise manner:
\begin{thm}\label{thm_main}
    Let $\Phi\in\mathcal L(\mathbb C^{d\times d})$ be a 2-positive and trace-preserving linear map. Then
    \begin{equation}\label{eq:thm_main_1}
        {\rm tr}(\Phi)\leq d\min\Re(\sigma(\Phi))+(d^2-d).
    \end{equation}
\end{thm}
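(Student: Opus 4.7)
The plan is to extract the bound from a single application of 2-positivity to a rank-one block matrix built from an eigenvector realizing $\mu$, together with the trace-preservation constraints on the diagonal blocks. First, pick an eigenvalue $\lambda_\star = \mu + i\nu \in \sigma(\Phi)$ with $\Re\lambda_\star = \mu$ and a corresponding eigenvector $V \in \mathbb C^{d\times d}$, $\Phi(V) = \lambda_\star V$, normalized so that $\mathrm{tr}(V^\dagger V) = 1$. Since the $2d \times 2d$ matrix
\[
\begin{pmatrix}I & V\\ V^\dagger & V^\dagger V\end{pmatrix}\;=\;\begin{pmatrix}I\\ V^\dagger\end{pmatrix}\begin{pmatrix}I & V\end{pmatrix}\;\geq\;0
\]
is rank-one positive semi-definite in $\mathbb C^{2\times 2}\otimes\mathbb C^{d\times d}$, the 2-positivity of $\Phi$ yields
\[
\begin{pmatrix}\Phi(I) & \lambda_\star V\\ \bar\lambda_\star V^\dagger & \Phi(V^\dagger V)\end{pmatrix}\;\geq\;0,
\]
with the additional normalizations $\mathrm{tr}(\Phi(I)) = d$ and $\mathrm{tr}(\Phi(V^\dagger V)) = 1$ from trace-preservation.

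Next, I would compute $\mathrm{tr}(\Phi)$ via the basis representation \eqref{trRep} in an orthonormal basis $\{g_j\}_{j=1}^d$ of $\mathbb C^d$ adapted to the singular-value decomposition of $V$. The off-diagonal block $\lambda_\star V$ of the 2-positivity inequality should deliver a contribution of $d\,\Re\lambda_\star$ to $\mathrm{tr}(\Phi)$, while the diagonal blocks, pinned by trace-preservation and combined with the spectral-radius bound $\Re\lambda_j \leq 1$ (valid for all eigenvalues of a positive trace-preserving map), should control the remaining contributions by at most $d^2 - d$. Summing these two contributions gives the claimed inequality.

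The main obstacle I foresee is that $\Phi$ is in general neither normal nor diagonalizable as a linear map on $\mathbb C^{d\times d}$, so its eigenvectors do not form an orthonormal basis and $\mathrm{tr}(\Phi)$ does not cleanly split along a spectral decomposition. I expect to handle this via a Schur triangularization of $\Phi$ (as a $d^2\times d^2$ matrix), which places the entire spectrum on the diagonal of an upper-triangular matrix so that $\mathrm{tr}(\Phi) = \sum_j \lambda_j$ is preserved, while the block-PSD constraint above is applied only on the one eigenvector realizing $\mu$. A standard density argument within the cone of 2-positive trace-preserving maps (for instance, perturbing $\Phi$ slightly toward the completely depolarizing channel $X\mapsto\mathrm{tr}(X)\,I/d$) then handles the case where $\Phi(I)$ is singular and a direct Schur-complement step would otherwise fail.
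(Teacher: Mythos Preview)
Your proposal has a genuine gap: a \emph{single} application of 2-positivity to the block matrix built from the eigenvector $V$ cannot by itself produce the quantity ${\rm tr}(\Phi)$. The positivity of
\[
\begin{pmatrix}\Phi(I)&\lambda_\star V\\\bar\lambda_\star V^\dagger&\Phi(V^\dagger V)\end{pmatrix}
\]
constrains only the four operators $\Phi(I),\Phi(V),\Phi(V^\dagger),\Phi(V^\dagger V)$, whereas ${\rm tr}(\Phi)=\sum_{j,k}\langle g_k|\Phi(|g_k\rangle\langle g_j|)|g_j\rangle$ requires the values of $\Phi$ on all $d^2$ matrix units. Testing your block inequality against singular vectors of $V$ and summing gives $2\Re(\lambda_\star)\|V\|_1\le d+1$---an upper bound on $\Re\lambda_\star$, in the wrong direction and not touching ${\rm tr}(\Phi)$ at all. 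Likewise, a Schur complement yields only $|\lambda_\star|^2\,{\rm tr}(V^\dagger\Phi(I)^{-1}V)\le 1$. The Schur-triangularization remark just restates ${\rm tr}(\Phi)=\sum_j\lambda_j$; combined with $\Re\lambda_j\le 1$ for the remaining $d^2-1$ eigenvalues this recovers only the trivial bound ${\rm tr}(\Phi)\le\mu+(d^2-1)$, not \eqref{eq:thm_main_1}. The sentence ``the off-diagonal block should deliver a contribution of $d\Re\lambda_\star$ to ${\rm tr}(\Phi)$'' has no mechanism behind it.

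What the paper actually does is apply 2-positivity $\binom{d}{2}$ times---once for each pair $j\neq k$, to the rank-one block matrix $|\psi_{jk}^+\rangle\langle\psi_{jk}^+|$---obtaining $2\Re\langle g_k|\Phi(|g_k\rangle\langle g_j|)|g_j\rangle\le(T_G)_{jj}+(T_G)_{kk}$ where $T_G$ is the $d\times d$ transition matrix $(T_G)_{jk}=\langle g_j|\Phi(|g_k\rangle\langle g_k|)|g_j\rangle$. Summing these pairwise bounds gives the key lemma ${\rm tr}(\Phi)\le d\,{\rm tr}(T_G)$; this is where the factor $d$ (rather than $1$) in front of $\mu$ originates. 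One then chooses $G$ so that $\mu$ becomes an eigenvalue of the stochastic matrix $T_G$---possible when the minimizing eigenvector is Hermitian---whence ${\rm tr}(T_G)\le\mu+(d-1)$. The case of complex $\lambda_\star$ is handled not by the eigenvector $V$ directly but by passing to the self-adjoint part $\tfrac12(\Phi+\Phi^\#)$ in a weighted Hilbert--Schmidt inner product (requiring a full-rank fixed point $\omega$) together with the Bendixson--Hirsch inequality; the density argument enters here, to guarantee such an $\omega$, rather than to invert $\Phi(I)$.
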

\if
Notably, this is strictly stronger than the trivial constraint
\begin{equation}\label{eq:trivialbound}
    {\rm tr}(\Phi)=\sum_{\lambda\in\sigma(\Phi)}\Re(\lambda)\leq \min\Re(\sigma(\Phi))+(d^2-1)\max\Re(\sigma(\Phi))
\end{equation}
which is satisfied by all linear maps for which ${\rm tr}(\Phi)$ is real (in particular, this includes all Hermitian-preserving maps).
Moreover, the reason that $\max\Re(\sigma(\Phi))$ does not appear explicitly in~\eqref{eq:thm_main_1} is that the spectral radius of all positive trace-preserving maps is $1$ \cite[Prop.~4.26]{Watrous18} so $\max\Re(\sigma(\Phi))=1$ for the maps we consider.
\fi 
Before presenting the proof, a few remarks are in order:
\begin{remark}\label{rem_boundtight}
    \begin{itemize}
        \item[(i)] Note that there is a trivial bound that holds for all positive, trace-preserving maps $\Phi$:
\begin{equation}\label{eq:trivialbound}
    \rm{tr}(\Phi) \leq \min \Re(\sigma(\Phi)) + (d^2 - 1).
\end{equation}
Indeed, for any map $ \Phi $ whose trace is real (as is the case, for example, for Hermitian-preserving maps), one has $
\rm{tr}(\Phi) = \sum_{\lambda \in \sigma(\Phi)} \Re(\lambda)$,
which is bounded above by $ \min \Re(\sigma(\Phi)) + (d^2 - 1)\max \Re(\sigma(\Phi)) $, corresponding to the worst-case configuration.  
The bound~\eqref{eq:trivialbound} then comes from the well-known fact that the spectral radius of any positive, trace-preserving map is equal to 1~\cite[Prop.~4.26]{Watrous18}, so $\max\Re(\sigma(\Phi)) \leq 1$.
Notably, the bound~\eqref{eq:thm_main_1} is strictly stronger than the trivial bound~\eqref{eq:trivialbound}.
        \item[(ii)] Beware that 2-positivity in Thm.~\ref{thm_main} cannot be weakened further as there exist positive, trace-preserving maps for which~(\ref{eq:thm_main_1}) is violated: 
An elementary counterexample is provided by the transpose map \mbox{$\tau(X) := X^T$} in dimension $d=2$. Indeed, it is well known that the transpose map is
positive but not 2-positive \cite{Choi75}. It has eigenvalues $ 1,-1 $ with multiplicities $ \frac{d(d+1)}{2} $ and $ \frac{d(d-1)}{2} $, respectively. 
Thus, the trace of $\tau$ is $1 \cdot \frac{d(d+1)}{2} + (-1) \cdot \frac{d(d-1)}{2} = d$ so \eqref{eq:thm_main_1} is satisfied if and only if $d \geq 3$, and hence is violated for $d=2$.
\if
The transpose map \sout{{$\tau(X) := X^T$}} in 2 dimensions has trace 2 and eigenvalues ${-1,1}$ so
$$
{\rm tr}(\tau)=2\not\leq 0=2\cdot(-1)+4-2=d\min\Re(\sigma(\tau))+(d^2-d)\,.
$$
\fi
In fact this example can be slightly generalized, refer to Example~\ref{ex_ttc} (\ref{app_a}).
Interestingly, however, in three or more dimensions we could not find any example of a positive trace-preserving map which violates (\ref{eq:thm_main_1}); actually, extensive numerical search suggests that the decomposable maps\footnote{
Recall that $\Phi\in\mathcal L(\mathbb C^{d\times d})$ is called \textit{decomposable} if there exist $\Phi_1,\Phi_2\in\mathcal L(\mathbb C^{d\times d})$ completely positive such that $\Phi=\Phi_1+\Phi_2\circ \tau$ with $\tau$ the usual transpose.
}
satisfy~(\ref{eq:thm_main_1}) as soon as $d\geq 3$---and currently we have no way of generating random (non-decomposable) positive maps
so we have no way to conduct numerics, at least not beyond the standard examples of positive, but not completely positive maps (e.g., Choi map \cite{Choi75b} and its generalizations \cite{Cho1992}, \cite[p.~301]{Bengtsson17}, reduction map \cite{HH99}, Robertson map \cite{Robertson83,Robertson85,CS12}, Osaka maps \cite{Osaka1991}, Breuer-Hall map \cite{Breuer06b,Breuer06,Hall06}, etc. See~\cite[Sec.~4]{CK07}, \cite{GF13,Kye2013} for overviews of such maps).
\item[(iii)]
Another thing to note is that Thm.~\ref{thm_main} is optimal
in the following sense:
For every $c>d$ there exists a 2-positive, trace-preserving map $\Phi$ such that
\begin{equation}\label{eq:optimal_bound}
    {\rm tr}(\Phi)> c \min\Re(\sigma(\Phi))+(d^2-c)\,.
\end{equation}
A corresponding example (Example~\ref{ex_bound_tight}) can be found in \ref{app_a}.
In other words, the ``ratio'' between the smallest and the largest eigenvalue in this upper bound cannot be improved further (without violating 2-positivity).
    \end{itemize}
\end{remark}

Theorem~\ref{thm_main}, while interesting in its own right, also leads directly to the following result for generators.  
As mentioned in the introduction, it generalizes a previously known statement---originally established under the assumption of complete positivity---to the more general framework of 2-positivity, with the additional advantage that the resulting proof is significantly simpler.
\begin{thm}\label{thm_main2}
    Let a generator $L\in\mathcal L(\mathbb C^{d\times d})$ of 2-positive, trace-preserving dynamics be given, i.e., $e^{tL}$ is 2-positive and trace-preserving for all $t\geq 0$. Then
    \begin{equation}\label{eq:thm_main2_1}
        {\rm tr}(L)\leq d\min\Re(\sigma(L))\,.
    \end{equation}
    Equivalently, all relaxation rates $\Gamma_j:=-\Re(\lambda_j)$ of any such $L$ satisfy
    \begin{equation}\label{eq:relax_rates}
        \Gamma_j\leq\frac1d\sum_k\Gamma_k
        \,.
    \end{equation}
\end{thm}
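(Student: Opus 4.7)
The plan is to deduce Theorem~\ref{thm_main2} directly from Theorem~\ref{thm_main} by applying it to the family $\Phi_t := e^{tL}$ and differentiating at $t=0^+$. Because $L$ generates a semigroup of 2-positive trace-preserving maps, the hypothesis of Theorem~\ref{thm_main} applies to $\Phi_t$ for every $t\geq 0$, yielding
\begin{equation*}
    \mathrm{tr}(e^{tL})-d^2\;\leq\;d\bigl(\min\Re(\sigma(e^{tL}))-1\bigr),
\end{equation*}
after subtracting $d^2=\mathrm{tr}(\mathrm{id})=d\cdot 1+d^2-d$ (which is the inequality at $t=0$, where it is saturated). The idea is then to divide by $t>0$ and send $t\downarrow 0$.

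The left-hand side limit is $\mathrm{tr}(L)$: this is just $\frac{d}{dt}\mathrm{tr}(e^{tL})|_{t=0}$, which exists and equals $\mathrm{tr}(L)$ either by linearity of the trace together with $\frac{d}{dt}e^{tL}|_{t=0}=L$, or by writing the trace as $\sum_j e^{t\lambda_j}$ with $\lambda_j$ the eigenvalues of $L$ (counted with algebraic multiplicity via the Jordan form).

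For the right-hand side I would use the spectral mapping theorem: $\sigma(e^{tL})=\{e^{t\lambda}:\lambda\in\sigma(L)\}$ with matching multiplicities. A first-order Taylor expansion gives $\Re(e^{t\lambda})=1+t\Re(\lambda)+O(t^2)$ for each $\lambda\in\sigma(L)$; since $\sigma(L)$ is finite, the $O(t^2)$ is uniform in $\lambda$, and taking the minimum over the finite spectrum yields
\begin{equation*}
    \min\Re(\sigma(e^{tL}))\;=\;1+t\min\Re(\sigma(L))+O(t^2)\qquad (t\to 0^+).
\end{equation*}
Dividing by $t$ and passing to the limit then delivers $\mathrm{tr}(L)\leq d\min\Re(\sigma(L))$, which is \eqref{eq:thm_main2_1}. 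The translation to relaxation rates is immediate: 2-positivity implies Hermitian-preservation, so $L$ has real trace $\mathrm{tr}(L)=\sum_j\Re(\lambda_j)=-\sum_j\Gamma_j$ while $\min\Re(\sigma(L))=-\max_j\Gamma_j$; hence \eqref{eq:thm_main2_1} reads $\max_j\Gamma_j\leq\frac{1}{d}\sum_k\Gamma_k$, which trivially implies \eqref{eq:relax_rates} for every individual~$j$.

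I expect no real obstacle: the only point needing care is the uniform Taylor estimate across $\sigma(L)$, but finiteness of the spectrum makes it automatic. The whole step is thus essentially a one-line consequence of Theorem~\ref{thm_main}, which is precisely the simplification over the Lyapunov-theoretic approach in \cite{MKC25} that the authors advertise.
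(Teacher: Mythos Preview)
Your proposal is correct and follows essentially the same approach as the paper: apply Theorem~\ref{thm_main} to $e^{tL}$, subtract $d^2$, divide by $t$, and take the limit $t\to 0^+$ (the paper uses the discrete version $t=1/n$, $n\to\infty$). The only cosmetic difference is that you justify the right-hand limit via the spectral mapping theorem and a uniform Taylor expansion, whereas the paper writes $n(\min\Re(\sigma(e^{L/n}))-1)=\min\Re(\sigma(n(e^{L/n}-\mathrm{id})))$ and invokes continuity of the spectrum under $n(e^{L/n}-\mathrm{id})\to L$; both arguments are valid and amount to the same computation.
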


Again, this is strictly stronger than the trivial bound~\eqref{eq:trivialbound} which for the case of positive trace-preserving generators takes the form ${\rm tr}(L)\leq\min\Re(\sigma(L))$. 
This is because positive, trace-preserving maps are trace-norm contractive~\cite{PG06}, so the real parts of all eigenvalues of the corresponding generators must be non-positive.  
Note also that, every such generator $L$ necessarily has $0$ as an eigenvalue~\cite[Prop.~5]{BNT08b}.
\if(because positive trace-preserving maps are trace-norm contractive \cite{PG06} so the real part of all eigenvalues of corresponding generators necessarily are necessarily non-positive; also every such $L$ has $0$ as an eigenvalue \cite[Prop.~5]{BNT08b}).
\fi

\subsection{Proof of Theorem~\ref{thm_main}}
We now turn to the proofs of our main results.
For Theorem~\ref{thm_main}, the key idea is to invoke a classical object: the so-called transition matrices.
More precisely, given $\Phi\in\mathcal L(\mathbb C^{d\times d})$ and any orthonormal basis $G:=\{g_j\}_{j=1}^d$ of $\mathbb C^d$ define $T_G(\Phi)\in\mathbb C^{d\times d}$ (or $T_G$, for short) via
$(T_G)_{jk}:=\langle g_j|\Phi(|g_k\rangle\langle g_k|)|g_j\rangle$.
The reason this is useful is that this allows for a natural bound between the trace of any 2-positive maps and any of its transition matrices:
\begin{lemma}\label{lemma_trPhiTG}
For all 2-positive maps $\Phi\in\mathcal L(\mathbb C^{d\times d})$ and all orthonormal bases $G$ of $\mathbb C^d$ it holds that
\begin{equation}\label{eq:trPhiTG}
    {\rm tr}(\Phi)\leq d\,{\rm tr}(T_G)\,.
\end{equation}
\end{lemma}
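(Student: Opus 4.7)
The plan is to expand $\mathrm{tr}(\Phi)$ via formula \eqref{trRep}, separate diagonal from off-diagonal contributions, and then bound the off-diagonal terms via 2-positivity. The diagonal piece is exactly $\mathrm{tr}(T_G)=\sum_j(T_G)_{jj}$, so the task reduces to estimating the off-diagonal sum. Because $\Phi$ is positive and therefore Hermitian-preserving, $\mathrm{tr}(\Phi)$ is real, so each summand may be replaced by its real part without loss. Hence it suffices to show
\[
\sum_{j\neq k}\Re\langle g_k|\Phi(|g_k\rangle\langle g_j|)|g_j\rangle\leq(d-1)\,\mathrm{tr}(T_G).
\]

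The central step is a standard $2\times 2$ block-matrix trick. For any fixed $j,k$, the rank-one block
\[
\begin{pmatrix}|g_k\rangle\langle g_k| & |g_k\rangle\langle g_j|\\ |g_j\rangle\langle g_k| & |g_j\rangle\langle g_j|\end{pmatrix}=\begin{pmatrix}|g_k\rangle\\|g_j\rangle\end{pmatrix}\begin{pmatrix}\langle g_k|&\langle g_j|\end{pmatrix}
\]
is a positive element of $\mathbb C^{2\times 2}\otimes\mathbb C^{d\times d}$, so 2-positivity of $\Phi$ makes the corresponding block of images positive in $\mathbb C^{2d\times 2d}$. Evaluating this PSD matrix on the vector $|e_1\rangle\otimes|g_k\rangle-|e_2\rangle\otimes|g_j\rangle$ (with $|e_1\rangle,|e_2\rangle$ the standard basis of $\mathbb C^2$) and using Hermitian-preservation to identify the two cross terms as complex conjugates yields
\[
\Re\langle g_k|\Phi(|g_k\rangle\langle g_j|)|g_j\rangle\leq\tfrac{1}{2}\bigl((T_G)_{jj}+(T_G)_{kk}\bigr).
\]

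Summing this inequality over $j\neq k$ gives $(d-1)\,\mathrm{tr}(T_G)$, since each diagonal entry $(T_G)_{ii}$ is counted exactly $d-1$ times on the right-hand side. Adding back the diagonal contribution $\mathrm{tr}(T_G)$ then produces the desired $\mathrm{tr}(\Phi)\leq d\,\mathrm{tr}(T_G)$. I do not anticipate a serious obstacle; the only subtlety is choosing the test vector with a \emph{minus} sign between its tensor components rather than the more familiar Cauchy--Schwarz ansatz $\alpha|e_1\rangle\otimes|g_k\rangle+\beta|e_2\rangle\otimes|g_j\rangle$ optimised in $\alpha,\beta$, which would yield only the weaker geometric-mean bound $|\Re\langle g_k|\Phi(|g_k\rangle\langle g_j|)|g_j\rangle|\leq\sqrt{(T_G)_{jj}(T_G)_{kk}}$. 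The arithmetic-mean form is what combines cleanly with the double sum over $j\neq k$ and delivers the stated factor of $d$.
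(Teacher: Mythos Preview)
Your proposal is correct and follows essentially the same route as the paper: expand $\mathrm{tr}(\Phi)$ via \eqref{trRep}, split off the diagonal $\mathrm{tr}(T_G)$, and control each off-diagonal real part by applying 2-positivity to the rank-one $2\times 2$ block and testing against $|e_1\rangle\otimes|g_k\rangle-|e_2\rangle\otimes|g_j\rangle$. The only cosmetic difference is that the paper pairs the $(j,k)$ and $(k,j)$ terms explicitly (using Hermitian-preservation to get $2\Re$) and sums over $j<k$, whereas you invoke reality of $\mathrm{tr}(\Phi)$ globally and sum over all $j\neq k$; the resulting counting is identical.
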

\begin{proof}
First we use the expression of the trace \eqref{trRep}:
\begin{align*}
    {\rm tr}(\Phi)&=\sum_{j,k=1}^d\langle g_k|\Phi(|g_k\rangle\langle g_j|)g_j\rangle\\
&=\sum_{k=1}^d\langle g_k|\Phi(|g_k\rangle\langle g_k|)|g_k\rangle +\sum_{j,k=1,j\neq k}^d\langle g_k|\Phi(|g_k\rangle\langle g_j|)g_j\rangle\\
&={\rm tr}(T_G)+\sum_{k=1}^d\sum_{j=k+1}^n2\Re \Bigl( \langle g_k|\Phi(|g_k\rangle\langle g_j|)g_j\rangle\Bigr).
\end{align*}
Now define the vectors $\psi_{jk}^\pm:=|0\rangle\otimes|g_k\rangle\pm|1\rangle\otimes|g_j\rangle\in\mathbb C^2\otimes\mathbb C^d$.
Because $\Phi$ is 2-positive (recall: ${\rm id}_2\otimes\Phi:\mathcal L(\mathbb C^2\otimes\mathbb C^d)\to \mathcal L(\mathbb C^2\otimes\mathbb C^d)$ is positive)
$\langle\psi_{jk}^-|({\rm id}_2\otimes\Phi)( |\psi^+_{jk}\rangle\langle\psi^+_{jk}| )|\psi^-_{jk}\rangle\geq 0$.
But this expression evaluates to
\begin{align*}
   \Big\langle \begin{pmatrix}
       g_k\\-g_j
   \end{pmatrix} \Big|({\rm id}_2\otimes \Phi)& \begin{pmatrix}
       |g_k\rangle\langle g_k|&|g_k\rangle\langle g_j|\\
    |g_j\rangle\langle g_k|&|g_j\rangle\langle g_j|
   \end{pmatrix} \Big| \begin{pmatrix}
       g_k\\-g_j
   \end{pmatrix} \Big\rangle \\
    &=\langle g_k|\Phi(|g_k\rangle\langle g_k|)|g_k\rangle-\langle g_k|\Phi(|g_k\rangle\langle g_j|)|g_j\rangle\\
    &-\langle g_j|\Phi(|g_j\rangle\langle g_k|)|g_k\rangle+\langle g_j|\Phi(|g_j\rangle\langle g_j|)|g_j\rangle\,.
\end{align*}
Thus---because this quantity is non-negative and $\Phi$ is Hermitian-preserving---we find
\begin{align*}
\hspace*{-50pt}2\Re \langle g_k|\Phi(|g_k\rangle\langle g_j|)g_j\rangle&=\langle g_k|\Phi(|g_k\rangle\langle g_j|)|g_j\rangle+\langle g_j|\Phi(|g_j\rangle\langle g_k|)|g_k\rangle\\
&\leq \langle g_k|\Phi(|g_k\rangle\langle g_k|)|g_k\rangle+\langle g_j|\Phi(|g_j\rangle\langle g_j|)|g_j\rangle\\
&=(T_G)_{jj}+(T_G)_{kk}\,.
\end{align*}
Altogether, this yields the desired inequality via
\begin{align*}
 {\rm tr}(\Phi)&={\rm tr}(T_G)+\sum_{j=1}^d\sum_{k=j+1}^n2\Re \langle g_k|\Phi(|g_k\rangle\langle g_j|)g_j\rangle\\
 &\leq {\rm tr}(T_G)+\sum_{j=1}^d\sum_{k=j+1}^d((T_G)_{jj}+(T_G)_{kk})\\
 &={\rm tr}(T_G)+(d-1)\sum_{j=1}^d(T_G)_{jj}=d\,{\rm tr}(T_G)\,.\qquad\qedhere
\end{align*}
\end{proof}

This lemma suggests the following strategy: Given $\Phi$ we could try to find an orthonormal basis $G$ such that ${\rm tr}(T_G)\leq\min\Re(\sigma(\Phi))+(d-1)$, because---combined with~\eqref{eq:trPhiTG}---this would imply (\ref{eq:thm_main_1}) at once.
While it is not clear how to carry out this construction for \textit{all} $\Phi$---assuming this is even possible---we will see that the set of maps for which it can be done is ``large enough'' to establish~\eqref{eq:thm_main_1} for all 2-positive trace-preserving maps.
However, before carrying out this strategy, once again some remarks are in order:
\begin{remark}
\begin{itemize}
    \item[(i)] Unsurprisingly, 2-positivity is necessary for Lemma~\ref{lemma_trPhiTG} to hold: Consider the qubit map $\Phi(X):=\sigma_xX^T\sigma_x$ (with $\sigma_x=|0\rangle \langle 1| + |1\rangle \langle 0|$ being the usual Pauli-$x$ matrix, and $\{|0\rangle,|1\rangle\}$ the standard basis of $\mathbb C^2$). This map is positive but not 2-positive, and ${\rm tr}(\Phi)=2>0=2\cdot 0=2\cdot {\rm tr}(\sigma_x)=2\, {\rm tr}(T)$. In particular, no matter how the pre-factor $d$ on the right-hand side of~(\ref{eq:trPhiTG}) is changed the inequality can never hold.
    \item[(ii)] More interestingly, the inequality in Lemma~\ref{lemma_trPhiTG} does \textit{not} characterize 2-positivity, in the following sense: There exist positive maps which are not 2-positive but which satisfy~(\ref{eq:trPhiTG}) for \textit{all} orthonormal bases $G$, refer to Example~\ref{ex_2p} (\ref{app_a}).
\end{itemize}
\end{remark}

Now for the promised proof.

\begin{proof}[Proof of Thm.~\ref{thm_main}]
We will proceed in three steps.
    \begin{itemize}
        \item[Step 1]: Following the strategy outlined above, we first show that~\eqref{eq:thm_main_1} holds for all 2-positive and trace-preserving maps $\Phi \in \mathcal{L}(\mathbb{C}^{d \times d})$ for which the eigenvalue $\lambda$ of $\Phi$ with the smallest real part is itself real.
    \end{itemize}
In this case there exists $X\in\mathbb C^{d\times d}$ Hermitian such that\footnote{
    This is true for all real eigenvalues $\lambda$ of a Hermitian-preserving linear map: $\Phi(Y)=\lambda Y$ implies $\Phi(Y^\dagger )=\Phi(Y)^\dagger =\overline{\lambda}Y^\dagger =\lambda Y^\dagger $ (because $\lambda$ is assumed to be real) so---unless $Y$ was Hermitian to begin with---$X:=\frac i2(Y-Y^\dagger )\neq 0$ is a Hermitian eigenvector of $\Phi$ to $\lambda$.\label{footnote_Hermitian_evector}
    }
    $\Phi(X)=\lambda X$. Let $G:=\{g_j\}_{j=1}^d$ be any orthonormal basis of $\mathbb C^d$ such that $X=\sum_j x_j|g_j\rangle\langle g_j|$ with $ x:=(x_1,\ldots,x_n)^T\in\mathbb R^d$ the eigenvalues of $X$. Then $T_G x=\lambda x$ because for all $j$
    \begin{align*}
        (T_G x)_j&=\sum_k(T_G)_{jk}x_k=\sum_k \langle g_j|\Phi(x_k|g_k\rangle\langle g_k|)|g_j\rangle\\
        &=\langle g_j|\Phi(X)|g_j\rangle=\lambda  \langle g_j|X|g_j\rangle=\lambda x_j\,.
    \end{align*}
Now $T_G$ having $\lambda=\min\Re(\sigma(\Phi))$ as an eigenvalue allows for the following upper bound: by Lemma~\ref{lemma_trPhiTG}
$$
{\rm tr}(\Phi)\leq d\,{\rm tr}(T_G)=d\sum_{\lambda'\in \sigma(T_G)}\lambda'\leq d\big(\lambda +d-1\big)=d  \min\Re(\sigma(\Phi))+d^2-d\,.
$$
In the second inequality we used that $\max\Re(\sigma(T_G))=1$ because $T_G$ is a stochastic matrix (as $\Phi$ is positive and trace-preserving) \cite[Ch.~8.7]{HJ1}.
    \begin{itemize}
        \item[Step 2]: Next, we show that (\ref{eq:thm_main_1}) holds for all $\Phi\in\mathcal L(\mathbb C^{d\times d})$ 2-positive and trace-preserving for which there exists $\omega>0$ such that $\Phi(\omega)=\omega$.
    \end{itemize}
Because $\omega>0$, $\langle A,B\rangle_\omega:={\rm tr}(A^\dagger \omega^{-1/2}B\omega^{-1/2})$ is an inner product on $\mathbb C^{d\times d}$.
Now consider the adjoint $\Phi^\dagger$ of $\Phi$ with respect to $\langle\cdot,\cdot\rangle_{\rm HS} (=\langle\cdot,\cdot\rangle_{I})$, as well as the adjoint $\Phi^\#$ of $\Phi$ with respect to $\langle\cdot,\cdot\rangle_{\omega}$. Using the basic relation \mbox{$\langle A,B\rangle_\omega=\langle A,\omega^{-1/2}B\omega^{-1/2}\rangle_{\rm HS}$} and the cyclic property of the trace, one readily verifies
\begin{equation}\label{eq:Phi_sharp}
    \Phi^\#=\omega^{1/2} \Phi^\dagger(\omega^{-1/2}(\cdot)\omega^{-1/2}) \omega^{1/2}\,.
\end{equation}
Observe that $\Phi^\#$ is 2-positive (because $\Phi$ is), trace-preserving (because $\Phi(\omega)=\omega$), and satisfies $\Phi^\#(\omega)=\omega$ (because $\Phi$ is trace-preserving, which is equivalent to $\Phi^\dagger(I) = I$)---hence the same is true for $\tilde\Phi:=\frac12(\Phi+\Phi^\#)$. But now $\tilde\Phi$ is a self-adjoint element of the Hilbert space $(\mathcal L(\mathbb C^{d\times d},\langle\cdot,\cdot\rangle_\omega)$ meaning that all eigenvalues of $\tilde\Phi$ are real. Hence, by Step~1 we know that ${\rm tr}(\tilde\Phi)\leq d  \min\Re(\sigma(\tilde\Phi))+d^2-d$.
To pass over from $\tilde\Phi$ to $\Phi$ we make two observations:
\begin{align*}
    {\rm tr}(\Phi^\#)&={\rm tr}\big(\big(\omega^{1/2} (\cdot) \omega^{1/2}\big)\circ \Phi^\dagger\circ \big(  \omega^{-1/2}(\cdot)\omega^{-1/2}\big)\big)\\
    &={\rm tr}\big(\Phi^\dagger\circ \big(  \omega^{-1/2}(\cdot)\omega^{-1/2}\big)\circ\big(\omega^{1/2} (\cdot) \omega^{1/2}\big)\big)={\rm tr}(\Phi^\dagger)=\overline{{\rm tr}(\Phi)}={\rm tr}(\Phi)
\end{align*}
(in the first step the cyclic property of the trace is invoked, and in the last step we used that ${\rm tr}(\Phi)\in\mathbb R$ because $\Phi$ is Hermitian-preserving), as well as the Bendixson-Hirsch inequality \cite{Bendixson02,Hirsch02}, \cite[Thm.~12.6.6]{Mirsky55} \footnote{
Beware that this inequality holds for arbitrary finite-dimensional complex Hilbert spaces: if $v$ is a normalized eigenvector of $A$ to some $\lambda$, then $$
\Re(\lambda)=\Big\langle v\Big|\frac{A+A^\dagger}{2}\Big|v\Big\rangle\geq \min_{\langle x,x\rangle=1}\Big\langle x\Big|\frac{A+A^\dagger}{2}\Big|x\Big\rangle=\min\sigma\Big(\frac{A+A^\dagger}{2}\Big)
$$
(the last step follows, e.g., via unitary diagonalization of self-adjoint operators).
In particular, this applies to $(\mathcal L(\mathbb C^{d\times d}),\langle\cdot,\cdot\rangle_\omega)$.
}
$$
\min\Re\Big(\sigma\Big(\frac12(\Phi+\Phi^\#)\Big)\Big)\leq \min\Re(\sigma(\Phi))\,.
$$
Putting things together,
\begin{align*}
    {\rm tr}(\Phi)=\frac12\big( {\rm tr}(\Phi)+{\rm tr}(\Phi^\#) \big)={\rm tr}(\tilde\Phi)
    &\leq d  \min\Re(\sigma(\tilde\Phi))+d^2-d\\
    &\leq d  \min\Re(\sigma(\Phi))+d^2-d\,.
\end{align*}
    \begin{itemize}
        \item[Step 3]: Finally, we show that (\ref{eq:thm_main_1}) holds for all $\Phi\in\mathcal L(\mathbb C^{d\times d})$ 2-positive and trace-preserving
    \end{itemize}
    The key here is that set of all 2-positive and trace-preserving maps which have a full-rank fixed point is dense in the set of all 2-positive and trace-preserving maps $\Phi$:
    Indeed, $\Phi_\lambda:=(1-\lambda)\Phi+\lambda{\rm tr}(\cdot)\frac{I}{d}$ approximates $\Phi$ (as $\Phi=\lim_{\lambda\to 0}\Phi_\lambda$), and $\Phi_\lambda$ has a full-rank fixed point for all $\lambda\in(0,1]$ because for all non-zero $X\geq 0$ one has $\Phi_\lambda(X)=(1-\lambda)\Phi(X)+\lambda \frac{{\rm tr}(X)}{d}{I}>0$, so the same has to be true for any fixed point of $\Phi_\lambda$ (which necessarily exists, cf.~\cite[Thm.~4.24]{Watrous18}).
    This shows that there exists a sequence $(\Phi_n)_{n\in\mathbb N}$ of 2-positive, trace-preserving maps which all have a full-rank fixed point such that $\Phi=\lim_{n\to\infty}\Phi_n$.
    By Step~2, ${\rm tr}(\Phi_n)\leq d  \min\Re(\sigma(\Phi_n))+d^2-d$ so taking the limit and using that the eigenvalues depend continuously on the input \cite[Ch.~II, Thm.~5.14]{Kato80} concludes the proof.
\end{proof}

\subsection{Proof of Theorem~\ref{thm_main2}}

Transferring Theorem~\ref{thm_main} to the corresponding generators now is straightforward:
\begin{proof}[Proof of Theorem~\ref{thm_main2}]
    Let $L\in\mathcal L(\mathbb C^{d\times d})$ be given such that $e^{tL}$ is 2-positive and trace-preserving for all $t\geq 0$. In particular, we can express $L$ as the limit $L=\lim_{n\to\infty}\frac{e^{L/n}-{\rm id}}{1/n}$ and use Thm.~\ref{thm_main} (i.e., $e^{L/n}$ is 2-positive and trace-preserving for all $n$) to compute
    \begin{align*}
        {\rm tr}(L)&=\lim_{n\to\infty}n({\rm tr}(e^{L/n})-d^2)\\
        &\leq \lim_{n\to\infty}n(d  \min\Re(\sigma(e^{L/n}))+d^2-d-d^2)\\
        &=d\lim_{n\to\infty}n(  \min\Re(\sigma(e^{L/n}))-1)\\
        &=d\lim_{n\to\infty}\min\Re\Big(\sigma\Big(\frac{e^{L/n}-{\rm id}}{1/n}\Big)\Big)=d\min\Re(\sigma(L))\,.
    \end{align*}
    Again, we used continuity of the trace as well as of the spectrum \cite[Ch.~II, Thm.~5.14]{Kato80}.
\end{proof}

In principle one could also translate the proof of Thm.~\ref{thm_main} to the generator case; however, some of the steps of the proof become more difficult in that case, which is why we opted for the easier route via the 2-positive trace-preserving maps here.
For a direct proof of the generator case without going via 2-positive maps refer to the companion paper \cite{CEKM25} which, notably, does not prove the 2-positive \textit{map} case (Thm.~\ref{thm_main}).

\section{Outlook}\label{sec_outlook}

We showed that the recently identified spectral constraint \eqref{eq:relax_rates} on the relaxation rates of quantum dynamics really is a manifestation of 2-positivity, rather than complete positivity. Moreover, as seen in Remark~\ref{rem_boundtight}~(iii) this bound is tight, even for completely positive maps. 
Together with the fact that the constraint is attained by some completely positive dynamics \cite{CKKS21}, this implies that the allowed regions of relaxation rates for all $k$-positive dynamics share the common supporting hyperplane defined by a linear inequality~\eqref{eq:relax_rates}.
This naturally raises the question: is there any way to \textit{spectrally} distinguish complete positivity from, say, 2-positivity? 
If such a distinction is possible, what form would the resulting constraints on the relaxation rates take? At the very least one might expect them to appear in a nonlinear form.

Another open question is to what degree trace-preservation is needed for the bound at the core of this work to be valid? As explained at the beginning of Sec.~\ref{sec:main}, the main influence of trace-preservation on the inequality itself was that the largest eigenvalue was always 0 (in the generator case), resp.~1 (in the 2-positive map case).
Hence one may wonder whether all (generators of) completely positive maps, resp.~2-positive maps satisfy
\begin{equation}\label{eq:conj}
    {\rm tr}(\Phi)\leq d\min\Re(\sigma(\Phi))+(d^2-d)\max\Re(\sigma(\Phi))\,?
\end{equation}
In fact, all these classes of maps are covered by a generalization of ``conditional complete positivity'' (which guarantees that the one-parameter semigroup generated by some $L$ is completely positive for all times) \cite{Evans77,Wolf08b}: A map $L$ is called \textit{conditionally 2-positive} if $e^{tL}$ is 2-positive for all $t\geq 0$. 
In particular, every completely positive, every conditionally completely positive, and every 2-positive map is conditionally 2-positive.
With this,~\eqref{eq:conj} becomes really a conjecture about the spectrum of arbitrary conditionally 2-positive maps.
Preliminary numerics for the completely positive maps seem to support validity of~\eqref{eq:conj}; however, even if this is true, it would likely necessitate a vastly different approach because our proof fundamentally relied on the existence of a fixed point (which can no longer be guaranteed for maps that are just completely positive). 

Finally, it would be interesting to find examples of positive trace-preserving maps which violate~\eqref{eq:thm_main_1} as soon as $d\geq 3$ (recall also Remark ~\ref{rem_boundtight}~(ii)). Because numerics suggest that any violation of this bound in this case has to come about via a \textit{non-decomposable} positive map, further pursuing this question may lead to the discovery of new maps of this type, or perhaps even a way to numerically sample such maps (which, to the best of our knowledge, remains an open problem).

\section*{Acknowledgments}
FvE is funded by the \textit{Deutsche Forschungsgemeinschaft} (DFG, German Research Foundation) -- project number 384846402, and supported by the Einstein Foundation (Einstein Research Unit on Quantum Devices) and the MATH+ Cluster of Excellence. DC was supported by the Polish National Science Center
under Project No. 2018/30/A/ST2/00837.
DC was supported by the Polish National Science Center
under Project No. 2018/30/A/ST2/00837.
G.K. was supported by JSPS KAKENHI Grants Nos. 24K06873.

\appendix
\section{Examples and Counterexamples}\label{app_a}

\begin{example}\label{ex_ttc}
Given $d\geq 2$, $\alpha\in[0,1]$, consider the trace-preserving map
\begin{align*}
    \Phi_\alpha:\mathbb C^{d\times d}&\to\mathbb C^{d\times d}\\
    X&\mapsto\frac{\alpha(d-1){\rm tr}(X){I}-\alpha X+(1-\alpha)X^T}{1+\alpha(d-2)(d+1)}\,.
\end{align*}
This has first been considered by Takasaki and Tomiyama \cite{TT83}, based on an earlier work of Choi \cite{Choi72}. Indeed, Takasaki and Tomiyama have shown that $\Phi_\alpha$ is positive but not 2-positive if $\alpha\in[0,\frac1d)$, completely positive if $\alpha\in[\frac1d,\frac12]$, and $(d-1)$-positive but not completely positive if $\alpha\in(\frac12,1]$.
One readily computes
\begin{align*}
    {\rm tr}(\Phi_\alpha)&=\frac{\alpha(d-1){\rm tr}\big({\rm tr}(\cdot){I}\big)-\alpha {\rm tr}({\rm id})+(1-\alpha){\rm tr}({(\cdot)}^T)}{1+\alpha(d-2)(d+1)}\\
    &=\frac{\alpha (d-1)d-\alpha d^2+(1-\alpha)d}{1+\alpha(d-2)(d+1)} =\frac{(1-2\alpha)d}{1+\alpha(d-2)(d+1)}.
\end{align*}
Moreover, $\Phi$
has simple eigenvalue 
$1$, $\frac{d(d-1)}2$-fold eigenvalue
$-\frac{1}{1+\alpha(d-2)(d+1)}$,
and $\frac{(d-1)(d+2)}2$-fold eigenvalue
$\frac{1-2\alpha}{1+\alpha(d-2)(d+1)}$.
Inserting everything into~(\ref{eq:thm_main_1}) and re-arranging things yields the inequality
$$
(1-2\alpha)d\leq d\cdot(-1)+(d^2-d)\big( 1+(d-2)(d+1)\alpha \big)
$$
which after a straightforward computation is equivalent to
\begin{equation}\label{eq:alpha_violate}
    \alpha\geq\frac{3-d}{(d^2-1)(d-2)+2}\,.
\end{equation}
For $d\geq 3$ this is trivially fulfilled because $\alpha\geq 0$. For $d=2$, \eqref{eq:alpha_violate} reduces to $\alpha\geq\frac12$ meaning the desired inequality is violated for $\alpha\in[0,\frac12)$ (i.e., for $d=2$, $\Phi_\alpha$ serves as a counterexample to Thm.~\ref{thm_main} when relaxing 2-positivity to positivity, because in the positive regime $\alpha\in[0,\frac12)$ the inequality~(\ref{eq:thm_main_1}) does not hold).
Combining these facts shows that, in particular, the transpose map ($\alpha=0$) is a counterexample to the relaxed version of Thm.~\ref{thm_main} (if and) only if $d=2$.
\end{example}

\begin{example}\label{ex_bound_tight}
    Let $d\in\mathbb N$, $c>d$. We start by observing that the function
\begin{align*}
    f:[0,1]&\to\mathbb R\\
    x&\mapsto (4-2c)x^2+(4d-8)x+(4-4d+2c)
\end{align*}
satisfies $f(1)=0$ and $f'(1)=4(d-c)<0$ meaning there exists $x\in(0,1)$ such that $f(x)>0$.
To construct $\Phi$ let $\alpha\in(0,\frac\pi2)$ be such that $\cos(\alpha)=x$.
We define $H:=\alpha(|0\rangle\langle 0|-|1\rangle\langle 1|)$, $U:=e^{iH}$, and $\Phi:=U(\cdot)U^\dagger $ ($\Phi$ is 2-positive and trace-preserving because $U$ is unitary).
One readily verifies that 
$\Phi$ has eigenvalues
$\{1,e^{2i\alpha},e^{2i\alpha}\}$ if $d=2$ and $\{1,e^{i\alpha},e^{2i\alpha},e^{-i\alpha},e^{2i\alpha}\}$ if $d\geq 3$; in either case, because $\alpha\in(0,\frac\pi2)$ one has
$$
\min \Re(\sigma(\Phi))=\cos(2\alpha)=2\cos^2(\alpha)-1=2x^2-1\,.
$$
Also 
${\rm tr}(\Phi)=|{\rm tr}(U)|^2=(d-2+2\cos(\alpha))^2=(d-2+2x)^2$. Putting things together, (\ref{eq:optimal_bound}) 
evaluates to $(d-2+2x)^2>c(2x^2-1)+d^2-c$. This is easily seen to
be equivalent to $f(x)>0$---which holds by construction---meaning that $\Phi$ satisfies~(\ref{eq:optimal_bound}), as desired.
\end{example}

\begin{example}\label{ex_2p}
    Consider the qubit map $\Phi(X):=4X_{11}e_1e_1^T+\sigma_xX^T\sigma_x$ so the Choi matrix of $\Phi$ reads
    $$
        \mathsf C(\Phi)=\begin{pmatrix}
            4&0&0&1\\0&1&0&0\\0&0&1&0\\
        1&0&0&0
        \end{pmatrix} \,.
    $$
This $\Phi$ is positive but not 2-positive, and
${\rm tr}(\Phi)=
6$ as is readily verified.
On the other hand, let $U\in\mathbb C^{2\times 2}$ be an arbitrary special unitary matrix, i.e., there exist $\phi,\omega,\xi\in\mathbb R$ such that
\begin{equation*}
    U=\begin{pmatrix}
        \cos(\phi)e^{i\xi}& \sin(\phi)e^{i\omega} \\
-\sin(\phi)e^{-i\omega}  & \cos(\phi)e^{-i\xi}
    \end{pmatrix} .
\end{equation*}
A straightforward computation shows that the corresponding transition matrix reads
$$
T_U=\begin{pmatrix}
    4\cos(\phi)^2&1\\
1&4\sin(\phi)^2
\end{pmatrix} \,.
$$
Hence ${\rm tr}(\Phi)=6\leq 8=2\cdot 4=d\,{\rm tr}(T_G(\Phi))$ for all orthonormal bases $G$ of $\mathbb C^2$, so $\Phi$ satisfies~(\ref{eq:trPhiTG}) for all orthonormal bases without being 2-positive.
\end{example}

  \bibliographystyle{elsarticle-num} 
  \bibliography{control21vJan20}



%
%
%
\end{document}